\newcommand\blfootnote[1]{%
	\begingroup
	\renewcommand\thefootnote{}\footnote{#1}%
	\addtocounter{footnote}{-1}%
	\endgroup
}
\newtheorem{theorem}{Theorem}[section]
\newtheorem{lemma}[theorem]{Lemma}
\newtheorem{corollary}[theorem]{Corollary}
\theoremstyle{definition}
\newtheorem{definition}[theorem]{Definition}
\newtheorem{example}[theorem]{Example}
\theoremstyle{remark}
\theoremstyle{Property}
\numberwithin{equation}{section}
\begin{document}

\title[Counting Hyperbolic Components in the Main Molecule]{Counting Hyperbolic Components \\ in the Main Molecule}

\author{Schinella D'Souza}
\address{Department of Mathematics, University of Michigan, Ann Arbor,  Michigan 48109-1043}
\email{dsouzas@umich.edu}

\date{}


\begin{abstract}
	We count the number of hyperbolic components of period $n$ that lie on the main molecule of the Mandelbrot set. We give a formula for how to compute the number of these hyperbolic components of period $n$ in terms of the divisors of $n$ and in the prime power case, an explicit formula is derived.
\end{abstract}

\maketitle


\section{\large Introduction}

\blfootnote{\emph{2020 Mathematics Subject Classification.} Primary 37F20; Secondary 37F10.}

Consider the map $f_c(z)=z^2+c$, where $c$ is the parameter variable and $z$ is the dynamic variable, both taking on complex values. Following \cite{M1}, we denote by $K(f_c)$ the filled Julia set, defined by taking the union of bounded orbits for $f_c$. Let the Mandelbrot set, $M$, be the compact subset of all parameter values $c$ such that $K(f_c)$ is connected. Recall that we may look at the period of the hyperbolic components of $M$ as shown in Figure 1. The main cardioid is the hyperbolic component that consists of the parameter values such that $f_c$ has an attracting fixed point in $\mathbb{C}$ \cite{Z}. 

\begin{definition}
	The \emph{main molecule} is the union of all hyperbolic components attached to the main cardioid through a chain of finitely many components. Let $M(n)$ denote the number of hyperbolic components of period $n$ on the main molecule.
\end{definition}

\begin{figure}[!h]
	\includegraphics[width=8.0cm, height=5.00cm]{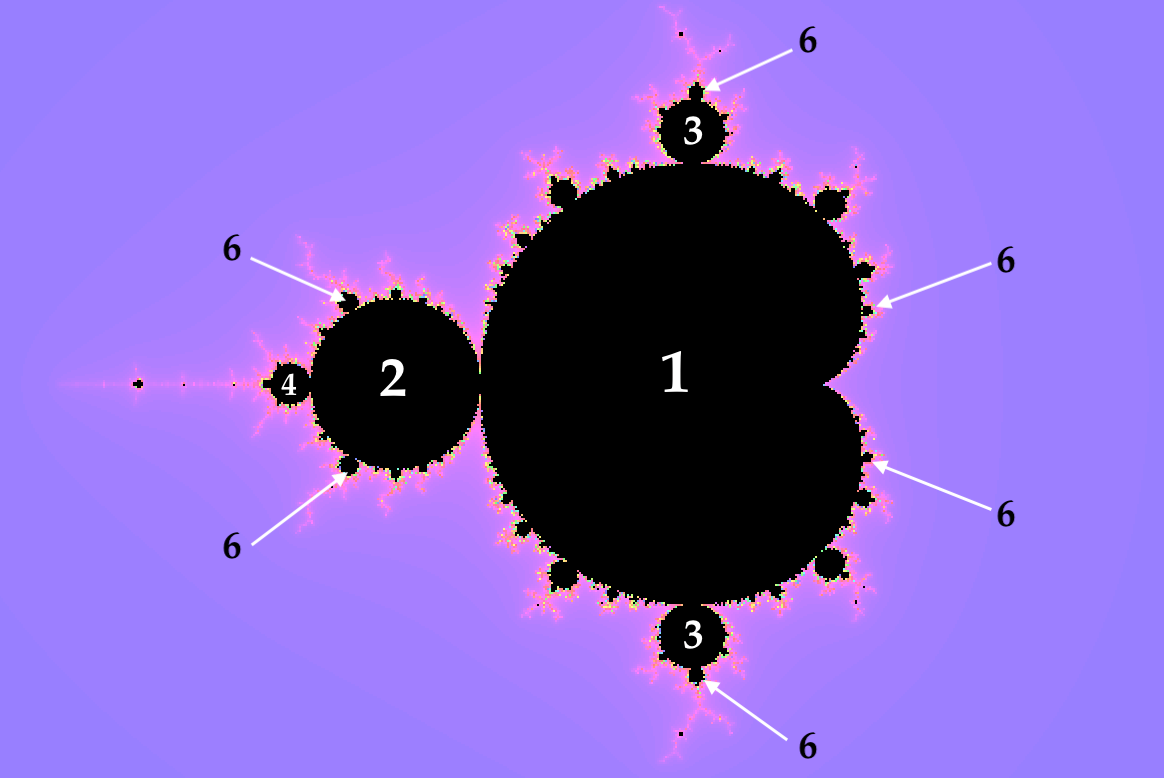}
	\centering
	\caption{Some periods of hyperbolic components of $M$. The hyperbolic components of period 6 on the main molecule are labeled in black.}
	\label{fig:slh}
\end{figure}

The closure of the main molecule is the locus of parameters for which the core entropy is zero. In this note, we are interested in computing $M(n)$ for various $n$ and we shall give formulas for $M(n)$. Apart from the main molecule, Lutzky has given formulas for the number of hyperbolic components of period $n$ \cite{L}, Kiwi and Rees have given formulas for certain hyperbolic components in the space of quadratic rational maps \cite{KR}, and Milnor and Poirier have studied hyperbolic components for polynomials of general degree \cite{MP}. In our case, the formulas for $M(n)$ turn out to be related to the Euler phi function and other combinatorial functions. For example, Figure 1 is an illustration that shows $M(6) = 6$. .


\section{\large Preliminary Results}

From \cite{BS}, there is a simple way to count how many hyperbolic components there are attached to specifically the main cardioid. 

\begin{lemma}
For any $n \in \mathbb{N}$, there are $\varphi(n)$ hyperbolic components of period $n$ attached to the main cardioid, where $\varphi$ is the Euler phi function.
\end{lemma}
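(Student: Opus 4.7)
The plan is to use the standard multiplier parameterization of the main cardioid to reduce the counting question to a count of primitive $n$-th roots of unity.

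First, I would recall that the main cardioid is naturally parameterized by the multiplier of the attracting fixed point: the map sending $c$ to $\lambda(c) = f_c'(z_c)$, where $z_c$ is the attracting fixed point, is a conformal isomorphism from the interior of the main cardioid onto the open unit disk, extending continuously to a homeomorphism of closures. Explicitly, $c = \frac{\lambda}{2} - \frac{\lambda^2}{4}$. This gives a clean coordinate on the boundary in which roots of unity correspond to distinguished bifurcation parameters.

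Next, I would invoke the standard bifurcation picture (this is exactly the content referenced from \cite{BS}): at a boundary point $c_0$ on the main cardioid where $\lambda(c_0) = e^{2\pi i k/n}$ is a \emph{primitive} $n$-th root of unity, the fixed point of $f_{c_0}$ is parabolic with exact rotation number $k/n$, and a hyperbolic component of exact period $n$ is born — i.e.\ as $c$ crosses from the main cardioid into a neighboring component, the parabolic fixed point splits into a repelling fixed point together with an attracting cycle of period exactly $n$. Each such primitive root of unity is the root point of exactly one such period-$n$ component attached to the main cardioid, and conversely every period-$n$ component attached directly to the main cardioid arises this way (since its root point must be a parabolic parameter of the main cardioid with rotation number of exact denominator $n$).

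The counting step is then immediate: the primitive $n$-th roots of unity on the boundary of the unit disk are in bijection with residues $k \in \{1,\dots,n\}$ satisfying $\gcd(k,n) = 1$, of which there are exactly $\varphi(n)$. Pulling this count back through the multiplier parameterization gives the desired $\varphi(n)$ period-$n$ hyperbolic components attached to the main cardioid.

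The main obstacle is the bifurcation statement in the middle step, namely that a primitive $n$-th root of unity on the cardioid is the root point of a unique period-$n$ hyperbolic component and that no period-$n$ component is attached elsewhere; however, this is precisely the classical result cited from \cite{BS}, so in this note it can simply be invoked rather than reproved. The rest of the argument is a bookkeeping exercise in the multiplier coordinate.
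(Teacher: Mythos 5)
Your proof is correct and takes essentially the same approach as the paper: both arguments identify period-$n$ components attached to the main cardioid with primitive $n$-th roots of unity via the rotation number/multiplier at the parabolic root point, and then count those roots by $\varphi(n)$. You spell out the multiplier parameterization $c = \lambda/2 - \lambda^2/4$ and the direction of the bijection a bit more explicitly than the paper does, but the underlying mechanism and the citation to the bifurcation picture are the same.
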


\begin{proof}
	Let $H$ be a hyperbolic component of period $n$ attached to the main cardioid. If $n = 1$, then the main cardioid itself is the only such hyperbolic component so there is $\varphi(1) = 1$ component of period 1 attached to the main cardioid. Now suppose $n > 1$. Then, the root point of $H$ is a parabolic point of ray period $n$ \cite{M1}. For $c \neq 1/4$, let $f_c(z) = z^2 + c$ be a quadratic polynomial with a parabolic fixed point at $\alpha \in \mathbb{C}$ with multiplier, $\lambda$, a primitive $n$th root of unity. This multiplier must have the form $\lambda = e^{2\pi i \theta}$ where $\theta \in \mathbb{Q}/\mathbb{Z}$ is the rotation number. Each hyperbolic component has an associated rotation number. Because the number of primitive $n$th roots of unity is given by $\varphi(n)$, there are $\varphi(n)$ possible rotation numbers for $H$ hence $\varphi(n)$ hyperbolic components of period $n$. 
\end{proof}

\begin{corollary}
Let $p, n \in \mathbb{N}$. For any hyperbolic component $H$ of period $p$, there are $\varphi(n)$ hyperbolic components of period $pn$ attached to H.
\end{corollary}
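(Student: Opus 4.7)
The plan is to mimic the proof of Lemma 2.1, replacing the main cardioid (a period-1 component) with the arbitrary period-$p$ component $H$. The key dynamical fact is that a hyperbolic component of period $pn$ that is attached to $H$ meets $H$ at a single root point, and at that root point the attracting cycle of period $p$ inside $H$ has become parabolic with multiplier a primitive $n$th root of unity. Conversely, every parabolic bifurcation of the period-$p$ cycle with multiplier a primitive $n$th root of unity produces exactly one satellite hyperbolic component of period $pn$ adjacent to $H$.

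Concretely, first I would take $c_0 \in H$, so $f_{c_0}$ has an attracting cycle $\{\alpha_1,\dots,\alpha_p\}$ of period exactly $p$, and let $\lambda(c)$ denote the multiplier of the analytic continuation of this cycle. The multiplier map $\lambda: H \to \mathbb{D}$ is a conformal isomorphism (this is the standard parametrization of a hyperbolic component, see \cite{M1}), so on $\partial H$ it gives a bijection with $\partial \mathbb{D}$. Satellite components of $H$ are in bijection with those boundary points at which $\lambda$ is a root of unity, and a satellite of period exactly $pn$ corresponds to $\lambda$ being a \emph{primitive} $n$th root of unity, because the first-return map of the $p$-cycle then acquires rotation number $k/n$ with $\gcd(k,n)=1$, forcing the new attracting cycle of $f_c^{p}$ to have exact length $n$ and hence the new attracting cycle of $f_c$ to have exact period $pn$.

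Once this correspondence is set up, the count is immediate: the number of primitive $n$th roots of unity is $\varphi(n)$, so the number of hyperbolic components of period $pn$ attached to $H$ is $\varphi(n)$.

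The main obstacle, as in Lemma 2.1, is justifying the correspondence between satellite components and primitive roots of unity of the multiplier, i.e.\ verifying that (i) every component attached to $H$ arises from such a parabolic root point, (ii) the period of the satellite is exactly $pn$ rather than a proper divisor, and (iii) distinct primitive $n$th roots of unity give distinct components. All three follow from the local bifurcation theory of parabolic fixed points of $f_c^p$ and the fact that $\lambda: \overline{H} \to \overline{\mathbb{D}}$ is a homeomorphism, which I would cite from \cite{M1} rather than reprove.
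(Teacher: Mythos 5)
Your proof is correct, but it takes a genuinely different route from the paper's. The paper invokes the Douady--Hubbard tuning map $i_H : M \to M$: since $i_H$ is a homeomorphism onto a baby Mandelbrot set, maps the main cardioid onto $H$, and sends hyperbolic components of period $n$ to hyperbolic components of period $pn$, the $\varphi(n)$ satellites of the main cardioid counted in Lemma~2.1 correspond exactly to the $\varphi(n)$ satellites of $H$ of period $pn$. You instead rerun the Lemma~2.1 argument directly on $H$: the multiplier map $\lambda : \overline{H} \to \overline{\mathbb{D}}$ is a homeomorphism, satellites of $H$ attach at boundary points where $\lambda$ is a root of unity, and primitive $n$th roots of unity are precisely the ones producing satellites of exact period $pn$, yielding $\varphi(n)$ of them. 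Both proofs are sound. The paper's approach is a slick reduction to the period-1 case via the global combinatorial self-similarity of $M$, at the cost of importing the machinery of tuning and renormalization; your approach is more self-contained and makes clear that the count is a purely local statement about $\partial H$ and the multiplier map, using only the same ingredients (multiplier parametrization of hyperbolic components, local parabolic bifurcation) that already underlie Lemma~2.1. If anything, your route makes Lemma~2.1 and Corollary~2.2 a single argument rather than two, whereas the paper's route reuses Lemma~2.1 as a black box.
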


\begin{proof}
	Douady and Hubbard \cite{DH} introduced a tuning map $i_H:M \rightarrow M$ whose image is a baby Mandelbrot set (that is, a homeomorphic copy of $M$) \cite{T}. From \cite{Z}, the idea of the tuning map is to take $c \in M$ and for each bounded component of the Fatou set corresponding to $f_c$, replace each of these bounded components with a homeomorphic copy of the filled Julia set corresponding to some parameter in $H$. This yields a set homeomorphic to the filled Julia set of $f_{i_H(c)}$. The tuning map $i_H$ also maps the main cardioid onto $H$ and maps hyperbolic components of period $n$ onto hyperbolic components of period $pn$. By Lemma 2.1, there must then be $\varphi(n)$ such hyperbolic components.
\end{proof}

\begin{theorem}
Let $n \in \mathbb{N}$ and write $n=d_1\cdot\cdot\cdot d_k$, where $d_1,...,d_k$ are divisors of $n$ with $d_i > 1$ for every $i \in \{1,...,k\}$. Then, we have

\[
M(n) =  \sum_{\substack{n=d_1...d_k \\ d_i>1}} \varphi(d_1)\cdot\cdot\cdot\varphi(d_k).
\]

\end{theorem}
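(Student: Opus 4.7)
The plan is to argue inductively using the tree structure of the main molecule. Each hyperbolic component on the main molecule, apart from the main cardioid, is attached directly at its root point to exactly one ``parent'' component lying closer to the main cardioid along the chain connecting them. So for any period-$n$ component $H$ on the main molecule, iterating ``take the parent'' produces a unique chain
\[
H_0 = \text{main cardioid},\ H_1,\ H_2,\ \ldots,\ H_k = H,
\]
of successively attached components, whose periods $1 = p_0, p_1, \ldots, p_k = n$ satisfy $p_{i-1} \mid p_i$ by the tuning picture. Setting $d_i = p_i / p_{i-1}$ yields an ordered factorization $n = d_1 \cdots d_k$ with every $d_i > 1$.

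The strategy is then to group the period-$n$ components on the main molecule by the ordered factorization they induce, and count each group. For a fixed ordered factorization $n = d_1 \cdots d_k$ with $d_i > 1$, I would count the chains realizing it by building them from the root outward: Lemma 2.1 provides $\varphi(d_1)$ choices for $H_1$ (a component of period $d_1$ attached to the main cardioid), and then Corollary 2.2, applied at each subsequent step to the already-chosen component $H_{i-1}$ of period $p_{i-1}$ with $n$ replaced by $d_i$, yields $\varphi(d_i)$ choices for $H_i$. Multiplying gives $\varphi(d_1)\cdots\varphi(d_k)$ period-$n$ components realizing this factorization, and summing over all ordered factorizations gives the stated formula. (The base case $n=1$ corresponds to the empty product, accounting for the main cardioid itself.)

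The main obstacle is justifying that this count has no overlaps or omissions, i.e., that the map ``component $H$ on the main molecule $\mapsto$ chain from main cardioid to $H$ $\mapsto$ ordered factorization'' is well-defined and that distinct chains produce distinct terminal components. Both points rest on the fact that hyperbolic components of $M$ are pairwise disjoint and can only meet at root points, giving the main molecule the structure of a tree rooted at the main cardioid; this is the nontrivial structural input (beyond Lemma 2.1 and Corollary 2.2) that needs to be invoked. Once that tree structure is in hand, uniqueness of the chain from any $H$ to the root is automatic, and the counting collapses to the multiplicative argument above.
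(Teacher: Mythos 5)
Your proposal is correct and is essentially the paper's argument: both decompose a period-$n$ component on the main molecule by its chain of attachments back to the main cardioid, applying Lemma 2.1 to the first link and Corollary 2.2 to each subsequent link to get $\varphi(d_1)\cdots\varphi(d_k)$ components per ordered factorization $n=d_1\cdots d_k$ with each $d_i>1$. The only organizational difference is that the paper phrases this as an induction, first establishing the recursion $M(n)=\sum_{d\mid n,\ d>1}\varphi(d)M(n/d)$ by peeling off the first component in the chain and then unrolling it, whereas you enumerate full chains directly as a product over the links; the underlying computation is identical. One thing you do that the paper does not: you explicitly flag the structural input that the main molecule is a tree rooted at the main cardioid (components meet only at root points, so each has a unique chain back to the root, and distinct chains yield distinct terminal components), which the paper relies on implicitly without comment. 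Making that explicit, or supplying a reference for it, is a genuine improvement to the exposition rather than a departure from the proof.
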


\begin{example}
Before presenting the proof, let us compute $M(12)$, the number of hyperbolic components of period 12 attached to the main cardioid through a chain of finitely many components. We need to take into account all possible ordered partitions of 12 into its divisors. For instance, $6 \cdot 2$ and $2 \cdot 6$ represent two different such partitions. With this in mind, we have
\begin{align*}
M(12) & = \varphi(12) + \varphi(6)\varphi(2) + \varphi(2)\varphi(6) + \varphi(4)\varphi(3) + \varphi(3)\varphi(4) +  \varphi(3)\varphi(2)\varphi(2) + \\& \hspace{0.5cm}\varphi(2)\varphi(3)\varphi(2) + \varphi(2)\varphi(2)\varphi(3) \\
 & = 4 + (2 \cdot 1) + (1 \cdot 2) + (2 \cdot 2) + (2 \cdot 2) + (2 \cdot 1 \cdot 1) + (1 \cdot 2 \cdot 1) + (1 \cdot 1 \cdot 2) \\
 & = 22.
\end{align*}
Figure 2 illustrates the period 12 hyperbolic components attached to the main cardioid through the period 2 and 3 hyperbolic components. In particular, (A) corresponds to the terms $\varphi(2)\varphi(6)$, $\varphi(2)\varphi(3)\varphi(2)$, and $\varphi(2)\varphi(2)\varphi(3)$. (B) along with the other period 3 hyperbolic component corresponds to the terms $\varphi(3)\varphi(4)$ and $\varphi(3)\varphi(2)\varphi(2)$. 
\begin{figure}
	\centering
	\begin{subfigure}{.5\textwidth}
		\centering
		\includegraphics[width = 4.5cm, height = 5cm]{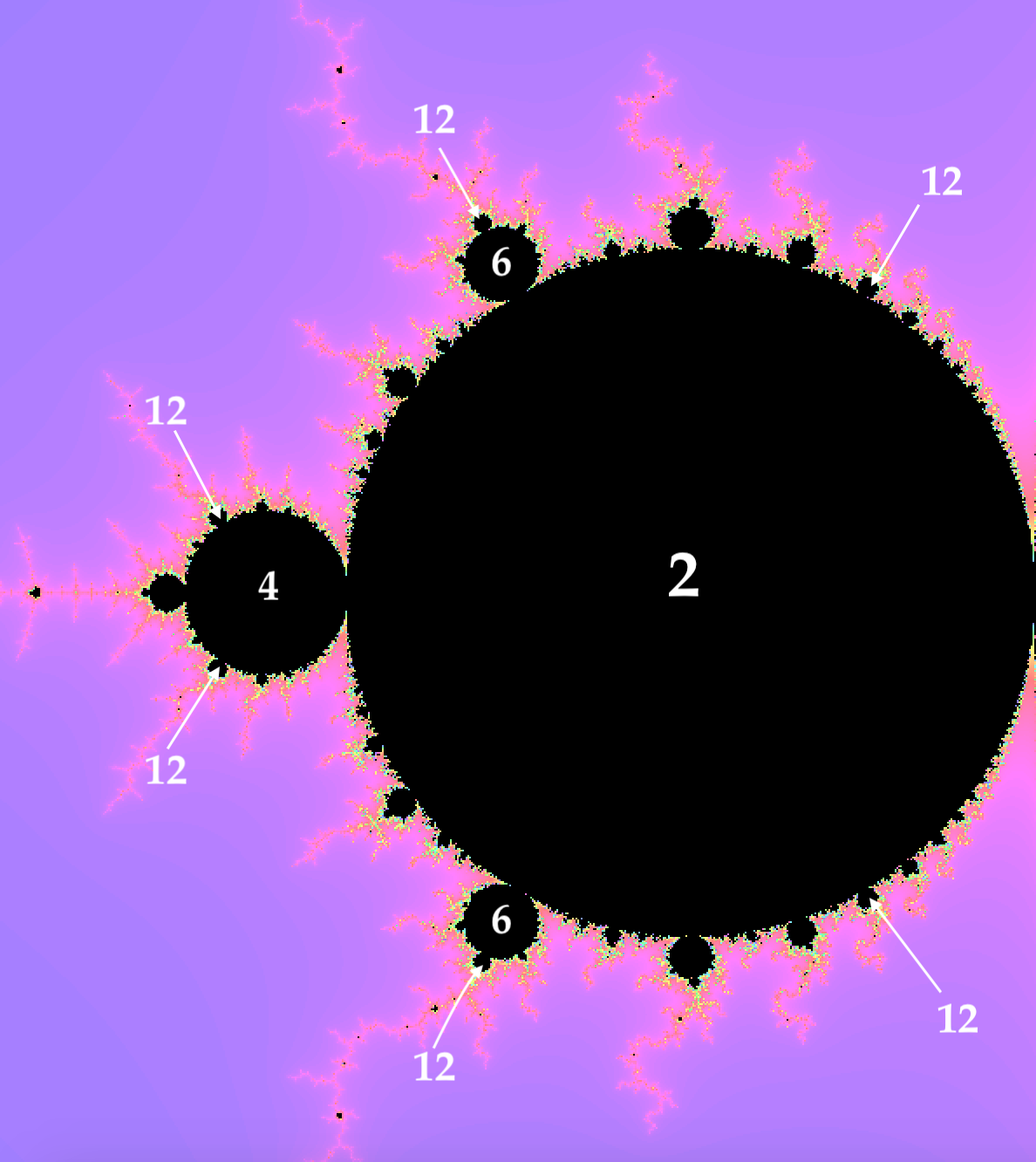}
		\caption{The period 2 hyperbolic component}
		\label{fig:sub1}
	\end{subfigure}%
	\begin{subfigure}{.5\textwidth}
		\centering
		\includegraphics[width = 6cm, height = 5cm ]{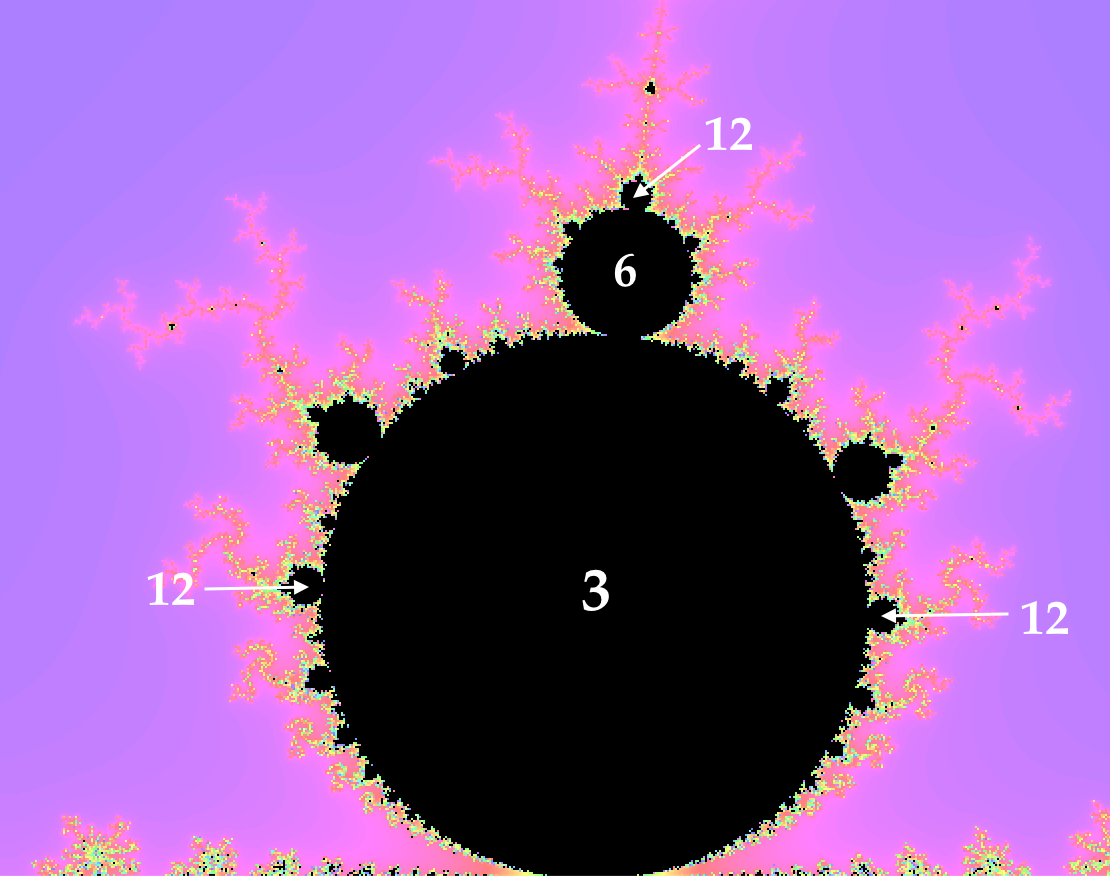}
		\caption{A period 3 hyperbolic component}
		\label{fig:sub2}
	\end{subfigure}
	\caption{Period 2 and 3 hyperbolic components attached to the main cardioid are shown along with the period 12 hyperbolic components attached to the main cardioid through a chain of finitely many components.}
	\label{fig:test}
\end{figure}

\end{example}

With the intuition from this example, we now present the proof of Theorem 2.3.

\begin{proof}
We proceed by induction on the number of divisors. For $n=1$, $M(1)=1$. Suppose $n \in \mathbb{N}$ has only one divisor, $d_1>1$. Then by Lemma 2.1, 
$$M(n) = \varphi(n) = \sum_{\substack{n=d_1 \\ d_1>1}} \varphi(d_1)$$
so the base case holds. Now suppose that the formula in the statement of the lemma holds for $k$ divisors, where $k \in \mathbb{N}$. We show that it also holds for $k+1$ divisors. Let $n \in \mathbb{N}$ have $k+1$ divisors. Then, using Lemma 2.1 we can count the number of hyperbolic components of period $n$ attached to the main cardioid and using Corollary 2.2 we can count the number of hyperbolic components of period $n$ attached to the components already attached to the main cardioid. Thus, we have
$$
M(n) = \varphi(n) + \varphi(d_1)M\Big(\frac{n}{d_1}\Big) + \dots + \varphi(d_{k+1})M\Big(\frac{n}{d_{k+1}}\Big).
$$
Applying the inductive hypothesis to each $M(\frac{n}{d_i})$ for $i \in \{1,...,k+1\}$, we may write the equation above as a sum over all the divisors of $n$. This is equivalent to:
$$
M(n) = \sum_{\substack{n=d_1...d_{k+1} \\ d_i>1}} \varphi(d_1)\dots \varphi(d_{k+1}).
$$
\end{proof}

With this formulation of $M(n)$, we can define $M(n)$ in a recursive way depending on the Euler $\varphi$ function and the divisors of $n$.

\begin{corollary}
For any $n \in \mathbb{N}$,
\[
 M(n) = \sum_{\substack{d | n \\ d>1}} \varphi(d)M\left(\frac{n}{d}\right).
\]
\end{corollary}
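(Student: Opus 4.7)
The plan is to derive the recursion directly from the closed form established in Theorem 2.3 by grouping ordered factorizations according to their first factor. Concretely, every tuple $(d_1,\ldots,d_k)$ with each $d_i>1$ and $d_1\cdots d_k = n$ is determined by a choice of first factor $d = d_1$ (a divisor of $n$ with $d>1$) together with an ordered factorization $(d_2,\ldots,d_k)$ of $n/d$ into parts greater than $1$. So I would partition the sum in Theorem 2.3 along these lines.

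After that partition, the sum becomes
\[
M(n) \;=\; \sum_{\substack{d \mid n \\ d > 1}} \varphi(d) \sum_{\substack{n/d = d_2\cdots d_k \\ d_i > 1}} \varphi(d_2)\cdots\varphi(d_k).
\]
The inner sum is exactly the expression given by Theorem 2.3 for $M(n/d)$, so it evaluates to $M(n/d)$, yielding the claimed recursion. I would state this substitution explicitly as a single line appealing to Theorem 2.3.

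The only subtlety, and what I would flag as the main (minor) obstacle, is the boundary case $d = n$: then the inner factorization is empty, with empty product $1$. This matches the base case $M(1) = 1$ from the proof of Theorem 2.3, so the convention is consistent; I would remark on this explicitly so the reader sees that the term $\varphi(n)\,M(1) = \varphi(n)$ in the recursion corresponds to the single-factor term $\varphi(n)$ in Theorem 2.3. With this remark, the proof is a one-step rearrangement and there are no further technicalities to resolve.
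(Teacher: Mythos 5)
Your proof is correct, but it runs in the opposite direction from the paper's. The paper's proof of this corollary is literally one sentence—it observes that the recursion was already written down as the key step inside the inductive proof of Theorem 2.3, where it was justified \emph{dynamically}: $\varphi(n)$ components of period $n$ attach directly to the main cardioid (Lemma 2.1), and each period-$d$ component attached there carries $M(n/d)$ components of period $n$ by tuning (Corollary 2.2). You instead take the closed form of Theorem 2.3 as a black box and re-derive the recursion \emph{combinatorially}, by grouping each ordered factorization $(d_1,\ldots,d_k)$ of $n$ according to its first part $d_1=d$, so that the inner sum collapses to $M(n/d)$. Both are valid; yours has the small virtue of being self-contained given only the statement of Theorem 2.3 (not its proof), and your remark on the boundary case $d=n$ (empty factorization, $M(1)=1$) correctly handles the one point where the bookkeeping could go wrong. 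The paper's version has the virtue of exhibiting the recursion as the dynamically natural statement, with the closed form as a consequence rather than the other way around.
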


\begin{proof}
This follows from the inductive step of the proof of Theorem 2.3.
\end{proof}


\section{\large Prime Powers}

In two different ways, we will find an explicit formula for positive integers that are prime powers. First, we begin with an example for small prime powers.

\begin{example} Let $n, p \in \mathbb{N}$ with $p$ prime. 
\begin{enumerate}[(a)]
\item Suppose $n=p=p \cdot 1$. Then, $M(p)=\varphi(p)=p-1$ as all components of period $p$ lie on the main cardioid. \\
\item Now let $n=p^2=p^2 \cdot 1 = p \cdot p$. Then, there are $\varphi(p^2)$ components of period $p^2$ on the main cardioid. There are more components to count here because of the components of period $p$ attached to the main cardioid. On these components, there are components of period $p$ that must be counted. There are $\varphi(p) \cdot \varphi(p)$ such components. In total, 
$$
M(p^2) = \varphi(p^2) + (\varphi(p))^2=p(p-1)+(p-1)^2=(p-1)(2p-1).
$$
\item Finally, suppose $n=p^3=p^3 \cdot 1 = p^2 \cdot p = p \cdot p^2 = p \cdot p \cdot p$. Applying similar reasoning as before and computing, our total is now
$$
M(p^3) = \varphi(p^3) + \varphi(p^2)\varphi(p)+\varphi(p)\varphi(p^2)+(\varphi(p))^3 = (p-1)(2p-1)^2.
$$
\end{enumerate}
\end{example}

This example sheds light on the general case of when $n$ is an arbitrary prime power, leading to the following result.

\begin{theorem}
For any $n, k \in \mathbb{N}$, if $n = p^k$ , then $M(p^k)=(p-1)(2p-1)^{k-1}$.
\end{theorem}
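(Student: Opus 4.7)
The plan is to prove the identity $M(p^k) = (p-1)(2p-1)^{k-1}$ by reducing it to a combinatorial calculation using Theorem 2.3, and then applying the binomial theorem. The alternative, induction on $k$ via Corollary 2.5, will also work; I'll describe both but focus on the direct combinatorial route.

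First, I would specialize Theorem 2.3 to $n = p^k$. Every divisor of $p^k$ that is greater than $1$ has the form $p^e$ with $e \geq 1$, so an ordered factorization $p^k = d_1 \cdots d_m$ with $d_i > 1$ corresponds bijectively to a composition $(e_1, \ldots, e_m)$ of $k$ into positive parts, via $d_i = p^{e_i}$. Substituting $\varphi(p^{e_i}) = p^{e_i - 1}(p-1)$ into Theorem 2.3 gives
\[
M(p^k) = \sum_{m=1}^{k} \sum_{\substack{e_1 + \cdots + e_m = k \\ e_i \geq 1}} \prod_{i=1}^{m} p^{e_i - 1}(p-1) = \sum_{m=1}^{k} \sum_{\substack{e_1 + \cdots + e_m = k \\ e_i \geq 1}} p^{k - m}(p-1)^m.
\]
The inner factor now depends only on $m$, so I would replace the inner sum by the count $\binom{k-1}{m-1}$ of compositions of $k$ with exactly $m$ parts.

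Next, I would extract the factor $(p-1)$ and reindex with $j = m - 1$ to obtain
\[
M(p^k) = (p-1)\sum_{j=0}^{k-1} \binom{k-1}{j} p^{k-1-j}(p-1)^{j},
\]
and then apply the binomial theorem to the sum, which collapses to $(p + (p-1))^{k-1} = (2p-1)^{k-1}$. This delivers the claimed formula.

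If one prefers the inductive route, the base case $k = 1$ is just Lemma 2.1, and for the inductive step one would use Corollary 2.5 to write $M(p^k) = \sum_{j=1}^{k}\varphi(p^j)M(p^{k-j})$, substitute the inductive formula into the $j < k$ terms, and evaluate the resulting geometric sum $\sum_{i=0}^{k-2} p^i(2p-1)^{k-2-i} = \frac{(2p-1)^{k-1} - p^{k-1}}{p-1}$; the algebra then telescopes to $(p-1)(2p-1)^{k-1}$. The main (only real) obstacle in either approach is making sure the indexing of compositions/divisors is done carefully so the $\varphi(p^j)$ factors line up, but once the sum has been put in the form shown above the binomial theorem finishes the job in one line.
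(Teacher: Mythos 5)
Your primary proof is correct, and it takes a genuinely different route from the paper's. The paper proves the identity by applying Corollary 2.4 to write $M(p^k) = \sum_{h=1}^{k}\varphi(p^h)M(p^{k-h})$, substituting the (implicitly inductive) formula for the lower-order terms, and summing a geometric series. You instead work directly from Theorem 2.3: an ordered factorization $p^k = d_1\cdots d_m$ with $d_i>1$ is the same as a composition $(e_1,\dots,e_m)$ of $k$ into positive parts, each term of the sum contributes $p^{k-m}(p-1)^m$, there are $\binom{k-1}{m-1}$ compositions with $m$ parts, and the binomial theorem collapses the result to $(p-1)(2p-1)^{k-1}$. Your approach is a bit cleaner: it is non-inductive, makes the combinatorial structure behind the closed form transparent (the $2p-1 = p + (p-1)$ in the base of the exponent comes straight out of the binomial identity), and avoids the geometric-series bookkeeping, which in the paper requires separating out the $h=k$ term and is written with a small index typo ($\sum_{1\le h<k}$ where $\sum_{1\le h\le k}$ is meant). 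The paper's route, on the other hand, is the natural one if you have already internalized Corollary 2.4 as the main recursive tool, and you correctly sketch that alternative as well. Both arguments are sound.
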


\begin{proof}
Let $p \in \mathbb{N}$ be a prime.  Applying Corollary 2.4 for $n=p^k$, we obtain:
\begin{equation*}
\begin{split}
M(p^k) & = \sum_{1 \leq h < k} \varphi(p^h)M(p^{k-h}) \\
 & = \sum_{1\leq h < k} (p^{h-1}(p-1)^2(2p-1)^{k-h-1})+p^{k-1}(p-1) \\
 & = \frac{(2p-1)^{k-1}(p-1)^2}{p}\Big(\sum_{1\leq h < k}(\frac{p}{2p-1})^h \Big)+p^{k-1}(p-1) \\
 & = \frac{(2p-1)^{k-1}(p-1)^2}{p} \Bigg( \frac{1-(\frac{p}{2p-1})^k}{1-\frac{p}{2p-1}}-1\Bigg) +p^{k-1}(p-1) \\
 & = (p-1)(2p-1)^{k-1}.
\end{split}
\end{equation*}
\end{proof}


\section{\large Products of Distinct Primes}

Consider now the case where the positive integer $n$ is of the form $n=p_1...p_m$, where $p_1,..., p_m$ are distinct primes. Before considering the general case, let us again look at a simple example.

\begin{example}
Let $n=p_1p_2$, where $p_1$ and $p_2$ are distinct primes. As before, we need to consider the number of ways to write this product, namely $p_1p_2 \cdot 1$, $p_1 \cdot p_2$, and $p_2 \cdot p_1$. Recall to determine the number of hyperbolic components with period $p_1p_2$, we need to count the components of period $p_1p_2$ on the main cardioid, the components of period $p_2$ on the components of period $p_1$ on the main cardioid, and the components of period $p_1$ on the components of period $p_2$ on the main cardioid. Therefore, we must have
$$
M(p_1p_2)=\varphi(p_1p_2)+\varphi(p_1)\varphi(p_2)+\varphi(p_2)\varphi(p_1)=3\varphi(p_1 p_2)=3(p_1-1)(p_2-1).
$$
\end{example}

For the general case of $n=p_1...p_m$, we first need to determine how many ways we can write this product of primes in a similar manner as the example above. This can be done by a recursive process. Instead of thinking about how to write these products of primes, we can consider the equivalent problem of determining the number of ordered partitions of $\{1,...,m\}$. Let this number be represented by $N(m)$. Define $N(0)=1$. It is clear that $N(1) = 1$. The ordered partitions of $\{1,2\}$ are $(\{1\},\{2\})$, $(\{2\},\{1\})$, and $(\{1,2\},\{\})$ so $N(2) = 3$. One can check by hand that in fact $N(3)=13$ and $N(4)=75$. The numbers $N(m)$ are known as the ordered Bell numbers or Fubini numbers and from \cite{OAG}, they satisfy
$$N(m) \sim \frac{m!}{2(\log(2))^{m+1}}.$$
The following is a well-known lemma about the ordered Bell numbers.

\begin{lemma}
Let $n = p_1p_2 \cdots p_m$ be a product of distinct primes. Then, 
$$N(m)=\displaystyle\sum_{k=1}^{m}{{m}\choose{k}}N(m-k).$$
\end{lemma}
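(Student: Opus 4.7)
The plan is to prove the identity by a straightforward combinatorial double count, classifying ordered partitions of $\{1,\dots,m\}$ according to the size of the first block.

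First I would fix $m \geq 1$ and let $S = \{1, \dots, m\}$. Every ordered partition of $S$ has the form $(B_1, B_2, \dots, B_r)$ where the $B_i$ are nonempty disjoint subsets whose union is $S$. I would partition the set of all such ordered partitions according to the cardinality $k = |B_1|$ of the first block, where $k$ ranges over $\{1, 2, \dots, m\}$.

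Next I would count the ordered partitions with a fixed first-block size $k$. There are $\binom{m}{k}$ ways to choose which $k$ elements of $S$ constitute $B_1$. Once $B_1$ is chosen, the tail $(B_2, \dots, B_r)$ is an ordered partition of the remaining $m-k$ element set, and by definition there are $N(m-k)$ such tails; in the edge case $k=m$, the tail is empty, which accounts for the convention $N(0) = 1$. Summing over $k$ from $1$ to $m$ yields
\[
N(m) = \sum_{k=1}^{m} \binom{m}{k} N(m-k),
\]
as desired.

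I do not expect any real obstacle here, since the argument is a clean "condition on the first block" decomposition and the only subtle point is the boundary convention $N(0)=1$, which is already built into the definition. The proof is essentially one paragraph long and does not require any of the machinery from the preceding sections.
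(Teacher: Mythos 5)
Your proof is correct and follows essentially the same approach as the paper: condition on the first block of the ordered partition, choose its $k$ elements in $\binom{m}{k}$ ways, and observe that the tail is an arbitrary ordered partition of the remaining $m-k$ elements, counted by $N(m-k)$. You are slightly more explicit than the paper in identifying the chosen $k$-element set as the \emph{first} block and in noting the role of the convention $N(0)=1$, but the underlying decomposition is identical.
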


\begin{proof}
Let $1 \leq k \leq m$. Begin by choosing $k$ numbers from $\{1,...,m\}$. The number of ordered partitions of the remaining $m-k$ numbers is $N(m-k)$. Because there are ${{m}\choose{k}}$ ways of choosing $k$ numbers from $\{1,...,m\}$, there are ${{m}\choose{k}}N(m-k)$ ordered partitions fixing $k$ numbers and ordering the rest. As we are counting the total number of ordered partitions as $k$ ranges between 1 and $m$, we have
$$N(m)=\displaystyle\sum_{k=1}^{m}{{m}\choose{k}}N(m-k).$$
\end{proof}

In this case where $n$ is a product of distinct primes, the following theorem shows that the number of hyperbolic components of period $n$ on the main molecule is closely related to the ordered Bell numbers.

\begin{theorem}
Let $n = p_1p_2 \cdots p_m$, a product of distinct primes. Then, we have 
$$M(n)=\displaystyle{M(p_1 \dots p_m)=N(m)(p_1-1) \dots (p_m-1)} = N(m) \varphi(n).$$
\end{theorem}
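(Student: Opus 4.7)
The plan is to derive the formula directly from Theorem 2.3, whose sum ranges over ordered factorizations $n = d_1 \cdots d_k$ with each $d_i > 1$. Since $n = p_1 \cdots p_m$ is squarefree, I would first set up a bijection between such factorizations and ordered partitions of $\{1,\ldots,m\}$ into nonempty blocks: a factorization corresponds to the partition $(S_1,\ldots,S_k)$ determined by $d_j = \prod_{i \in S_j} p_i$, and conversely each ordered partition into nonempty blocks yields a unique ordered factorization of $n$ into divisors greater than $1$.

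Next, I would exploit multiplicativity of Euler's totient. Because the blocks $S_j$ are disjoint, the corresponding factors $d_j$ are pairwise coprime, so
$$\varphi(d_1)\cdots\varphi(d_k) = \varphi(d_1\cdots d_k) = \varphi(n) = (p_1-1)\cdots(p_m-1).$$
Hence every term appearing in the sum of Theorem 2.3 equals $\varphi(n)$, independent of which factorization was chosen.

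Since $N(m)$ counts precisely the ordered partitions of $\{1,\ldots,m\}$ into nonempty blocks, the number of terms in the sum equals $N(m)$, and the formula $M(n) = N(m)\varphi(n)$ falls out immediately after factoring $\varphi(n)$ outside the sum.

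There is no substantial obstacle here; the argument is essentially bookkeeping once Theorem 2.3 is in hand. The one step worth stating carefully is the bijection between ordered factorizations of $n$ into factors greater than $1$ and ordered partitions of $\{1,\ldots,m\}$, which depends crucially on $n$ being squarefree so that divisors greater than $1$ are in bijection with nonempty subsets of $\{p_1,\ldots,p_m\}$. Lemma 4.2 is not needed for this direct approach, although it would become central in an alternative proof by induction on $m$ via Corollary 2.4, where one would group divisors $d \mid n$ with $d > 1$ by the size $k$ of their prime support and apply the ordered Bell recursion to collapse $\sum_{k=1}^m \binom{m}{k} N(m-k)$ into $N(m)$.
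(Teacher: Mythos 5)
Your proof is correct and follows essentially the same route as the paper: both start from Theorem 2.3, use multiplicativity of $\varphi$ on the pairwise coprime factors $d_i$ to show every term in the sum collapses to $\varphi(n)$, and identify the number of terms with $N(m)$ via the correspondence between ordered factorizations of a squarefree $n$ and ordered set partitions of $\{1,\ldots,m\}$. Your version simply makes the bijection more explicit, which is a minor expository improvement rather than a different argument.
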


\begin{proof}
By Theorem 2.3, if we write $n=d_1\cdot\cdot\cdot d_k$, where $d_1,...,d_k$ are divisors of $n$ with $d_i > 1$ for every $i \in \{1,...,k\}$, then we have
$$
M(p_1 \dots p_m) =  \sum_{\substack{n=d_1...d_k \\ d_i>1}} \varphi(d_1)\cdot\cdot\cdot\varphi(d_k).
$$
For $r_1 \in \{1, \dots, m \}$, we have that  $d_1 = p_{i_1} \dots p_{i_{r_1}}$ where $p_{i_1}, \dots ,p_{i_{r_1}}$ are $r_1$ of the primes $p_1, \dots, p_m$. Therefore,
$$\varphi(d_1) = \varphi(p_{i_1} \dots p_{i_{r_1}}) = \varphi(p_{i_1}) \dots \varphi(p_{i_{r_1}}).$$
For $r_2 \in \{1,...,m-r_1\}$, $d_2 = p_{j_1} \dots p_{j_{r_2}}$ where $p_{j_1}, \dots, p_{j_{r_2}}$ are $r_2$ of the remaining $m-r_1$ primes. Similarly,
$$\varphi(d_2) = \varphi(p_{j_1} \dots p_{j_{r_2}}) = \varphi(p_{j_1}) \dots \varphi(p_{j_{r_2}}).$$
Continuing in this way, at the $k$-th step, we will have exhausted all of the $m$ primes. We may then write $n = d_1 \dots d_k$ and note that
$$\varphi(d_1) \dots \varphi(d_k) = \varphi(p_1) \dots \varphi(p_m).$$
We are summing over all possible ways of writing $n$ in terms of its divisors. Each term in the sum is of the form $\varphi(p_1) \dots \varphi(p_m)$ and there are $N(m)$ possible ways to do so. Therefore, 
$$\displaystyle{M(p_1 \dots p_m)=N(m)\varphi(p_1) \dots \varphi(p_m) = N(m)(p_1-1) \dots (p_m-1)}=N(m)\varphi(n).$$
\end{proof}

Now let $p_m$ denote the $m$th prime number. As before, let $n=p_1 \cdots p_m$. We then have
\begin{align*}
	\frac{M(n)}{N(m)\cdot n} &= \frac{N(m)(p_1-1) \cdots (p_m-1)}{N(m) \cdot p_1 \cdots p_m} \\
	&= (1-\frac{1}{p_1}) \cdots (1-\frac{1}{p_m}) \\
	&< 1
\end{align*}
As $n \to \infty$ we must have $m \to \infty$ and it is well-known that the product above tends to zero as $m \to \infty$. Therefore, $M(n) = o(N(m) n)$ as $n \to \infty$.

\bigskip


 \section{\large Acknowledgements}

I would like to thank Giulio Tiozzo for his supervision and helpful comments.

 \bigskip


\bibliographystyle{amsplain}

\end{document}